\theoremstyle{definition}
\newtheorem{definition}{Definition}[section]
\newtheorem{conjecture}[definition]{Conjecture}
\newtheorem{remark}[definition]{Remark}
\theoremstyle{plain}
\newtheorem{theorem}[definition]{Theorem}
\newtheorem{corollary}[definition]{Corollary}
\newcommand{\ff}{\mathcal{LT}}
\newcommand{\pq}{\mathcal{W}}
\begin{document}

\title[POLYNOMIAL IDENTITIES FOR MUTATION ALGEBRAS]
{HIGHER POLYNOMIAL IDENTITIES FOR MUTATIONS OF ASSOCIATIVE ALGEBRAS}

\author{Murray R. Bremner}

\address{Department of Mathematics and Statistics,
University of Saskatchewan,
Saskatoon,
Canada}

\email{bremner@math.usask.ca}

\author{Jose Brox}

\address{University of Coimbra, CMUC,
Department of Mathematics,
3004-504, Coimbra,
 Portugal}

\email{josebrox@mat.uc.pt}

\author{Juana S\'anchez-Ortega}

\address{Department of Mathematics and Applied Mathematics,
University of Cape Town,
South Africa}

\email{juana.sanchez-ortega@uct.ac.za}

\subjclass[2020]{Primary 18M70; Secondary 16R10, 16W10, 17A30, 17A50, 17B60, 17C65, 17D25, 68W30}

\keywords{Associative algebras, mutation algebras, nonassociative algebras,
Lie-admissible, Jordan-admissible,
polynomial identities, algebraic operads,
representation theory of the symmetric group,
computer algebra,
theoretical particle physics.}

\thanks{Murray Bremner was supported by the Discovery Grant \emph{Algebraic Operads}
from NSERC, the Natural Sciences and Engineering Research Council of Canada.
Jose Brox was supported by the Portuguese Government through grant SFRH/BPD/118665/2016 (FCT/Centro 2020/Portugal 2020/ESF).
Juana S\'anchez-Ortega was supported by a CSUR grant from the NRF,
the National Research Foundation of South Africa. This work was
partially supported by the Centre for Mathematics of the University of Coimbra - UID/MAT/00324/2020, funded by the Portuguese Government through
FCT/MCTES.
\includegraphics[width=75pt]{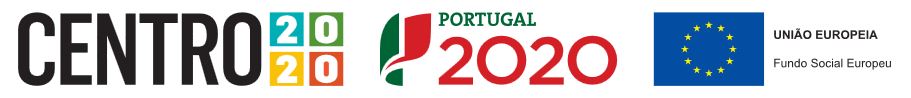}}

\begin{abstract}
We study polynomial identities satisfied by the mutation product $xpy - yqx$
on the underlying vector space of an associative algebra $A$, where $p, q$ are
fixed elements of $A$.
We simplify known results for identities in degree $4$,
proving that only two identities are necessary and sufficient to generate them all; in degree 5,
we show that adding one new identity suffices; in degree 6, we demonstrate the existence of a number of new identities.
\end{abstract}

\maketitle

%%%%%%%%%%%%%%%%%%%%%%%%%%%%%%%%%%%%%%%%%%%%%%%%%%%%%%%%%%%%%%%%%%%%%%%%%%%%%%%%%%%%%%%%%%%%%%%%
%%%%%%%%%%%%%%%%%%%%%%%%%%%%%%%%%%%%%%%%%%%%%%%%%%%%%%%%%%%%%%%%%%%%%%%%%%%%%%%%%%%%%%%%%%%%%%%%
%%%%%%%%%%%%%%%%%%%%%%%%%%%%%%%%%%%%%%%%%%%%%%%%%%%%%%%%%%%%%%%%%%%%%%%%%%%%%%%%%%%%%%%%%%%%%%%%

\section{Introduction}

Let $A$ be an associative algebra over a field $\mathbb{F}$ of characteristic 0.
Fix two elements $p, q \in A$ and define a new bilinear operation
on the underlying vector space:
\[
x \ast_{pq} y = xpy - yqx.
\]
The resulting nonassociative algebra $A_{pq}$ is called the $pq$-\emph{mutation} of $A$.

Mutation algebras were introduced by theoretical physicists around 1980; see
\cite[equation (1.6b)]{KMS} and \cite[equation (66)]{S1981}.
For a survey of early work by mathematicians on this topic, see \cite{MS}.
For a detailed exposition of the structure theory of mutation algebras, see \cite{EMbook}.
For mutations of nonassociative algebras, see \cite{Boers}.

To motivate the investigation of polynomial identities for mutation algebras,
we paraphrase some comments from \cite[Preface]{EMbook}.
Mutation algebras are both Lie- and Jordan-admissible,
but they also satisfy other more complex identities of higher degree;
see \cite[Chapter 5]{EMbook}.
It is an open problem to determine a complete set of independent identities
satisfied by all mutation algebras for arbitrary $p$, $q$.
In fact, mutation algebras do not form a variety defined by polynomial identities.
We note that Lie- and Jordan-admissible algebras were introduced by
Albert \cite[\S IV.1-2]{Albert}.

Polynomial identities for mutation algebras were first studied systematically
by Montaner \cite{M} using the classical techniques of nonassociative algebra
\cite{Osborn,ZSSS}.
That work did not consider the original operation $\ast_{pq}$ but decomposed it
as the sum of commutative and anticommutative operations:
$x \ast_{pq} y = \{x,y\} + [x,y]$ where
\[
\{x,y\} = \tfrac12 ( x \ast_{pq} y + y \ast_{pq} x ),
\qquad
[x,y] = \tfrac12 ( x \ast_{pq} y - y \ast_{pq} x ).
\]
For further information about the notion of polarization of a binary operation,
see \cite{Markl-Remm}.
Furthermore, the work of Montaner considered identities which are not necessarily multilinear, but hand calculation restricted the degree of the identities to $n \le 4$.

We use a different approach which allows us to simplify
the known results in degree 4, to determine a complete set of identities in degree 5, and to demonstrate the existence of a number of new identities in degree 6:
\begin{itemize}[leftmargin=*]
\item
We use elementary concepts from the theory of algebraic operads \cite{BD,LV,Markl,MSS}.
\item
Our main tool is computer algebra, in particular:
\begin{itemize}[leftmargin=*]
\item
Linear algebra over the rational numbers and finite prime fields:
the row canonical form of a matrix using Gaussian elimination.
\item
Linear algebra over the integers:
the Hermite normal form of a matrix and
the Lenstra-Lenstra-Lov\'asz algorithm (LLL, see \cite{LLL,BP}) for lattice basis reduction.
\end{itemize}
\item
We consider only multilinear identities for the original operation $x \ast_{pq} y$:
this allows us to use the representation theory of the symmetric group \cite{BMP}
to decompose the computations into small pieces
corresponding to irreducible representations.
\end{itemize}

%%%%%%%%%%%%%%%%%%%%%%%%%%%%%%%%%%%%%%%%%%%%%%%%%%%%%%%%%%%%%%%%%%%%%%%%%%%%%%%%%%%%%%%%%%%%%%%%
%%%%%%%%%%%%%%%%%%%%%%%%%%%%%%%%%%%%%%%%%%%%%%%%%%%%%%%%%%%%%%%%%%%%%%%%%%%%%%%%%%%%%%%%%%%%%%%%
%%%%%%%%%%%%%%%%%%%%%%%%%%%%%%%%%%%%%%%%%%%%%%%%%%%%%%%%%%%%%%%%%%%%%%%%%%%%%%%%%%%%%%%%%%%%%%%%

\section{Algebraic Operads}

%%%%%%%%%%%%%%%%%%%%%%%%%%%%%%%%%%%%%%%%%%%%%%%%%%%%%%%%%%%%%%%%%%%%%%%%%%%%%%%%%%%%%%%%%%%%%%%%

\subsection{The free nonsymmetric operad}

We write $T_n$ for the set of all complete rooted plane binary trees with $n$ leaves
denoted by asterisks; for $n = 1$ there is only the exceptional tree with one leaf and no root,
$T_1 = \{ \ast \}$.
Each tree in $T_n$ contains $n{-}1$ internal nodes (including the root);
hence the size of $T_n$ is the Catalan number $\tfrac{1}{n}\binom{2n{-}2}{n{-}1}$.
We write $U_n$ for the set of all association types in degree $n$:
balanced placements of parentheses in a sequence of $n$ asterisks.
There is a bijection $\mu_n\colon T_n \to U_n$ defined recursively:
$\mu_1(\ast) = \ast$;
for every internal node $v$ with left and right subtrees
$t_1 \in T_{n_1}$ and $t_2 \in T_{n_2}$
we replace the subtree with root $v$ by $( \, \mu_{n_1}(t_1) \, \mu_{n_2}(t_2) \, )$.
For example,
\[
\adjustbox{valign=m}{
\begin{xy}
(  0,  0 )*{} = "root";
( -6, -6 )*{} = "l";
(  6, -6 )*{} = "r";
{ \ar@{-} "root"; "l" };
{ \ar@{-} "root"; "r" };
( -10, -12 )*{\ast} = "ll";
( -2, -12 )*{\ast} = "lr";
{ \ar@{-} "l"; "ll" };
{ \ar@{-} "l"; "lr" };
(  2, -12 )*{} = "rl";
( 10, -12 )*{\ast} = "rr";
{ \ar@{-} "r"; "rl" };
{ \ar@{-} "r"; "rr" };
( -2, -18 )*{\ast} = "rll";
(  6, -18 )*{} = "rlr";
{ \ar@{-} "rl"; "rll" };
{ \ar@{-} "rl"; "rlr" };
(  2, -24 )*{\ast} = "rlrl";
( 10, -24 )*{\ast} = "rlrr";
{ \ar@{-} "rlr"; "rlrl" };
{ \ar@{-} "rlr"; "rlrr" };
\end{xy}
}
\quad \xrightarrow{\qquad \scalebox{1}{$\mu_6$} \qquad} \quad
(\ast\ast)((\ast(\ast\ast))\ast)
\]
(We omit the outermost pair of parentheses corresponding to the root of the tree.)

If $t_1 \in T_{n_1}$ and $t_2 \in T_{n_2}$ then
the partial composition $t_1 \circ_i t_2 \in T_{n_1{+}n_2{-}1}$ for $1 \le i \le n_1$
is obtained by grafting the right tree into the left at position $i$:
that is, identifying leaf $i$ of $t_1$ (from left to right) with the root of $t_2$.
For example,
\[
\adjustbox{valign=m}{
\begin{xy}
(  0,  0 )*{} = "root";
( -6, -6 )*{} = "l";
(  6, -6 )*{} = "r";
{ \ar@{-} "root"; "l" };
{ \ar@{-} "root"; "r" };
( -10, -12 )*{\ast} = "ll";
( -2, -12 )*{\ast} = "lr";
{ \ar@{-} "l"; "ll" };
{ \ar@{-} "l"; "lr" };
(  2, -12 )*{\ast} = "rl";
( 10, -12 )*{\ast} = "rr";
{ \ar@{-} "r"; "rl" };
{ \ar@{-} "r"; "rr" };
\end{xy}
}
\quad \circ_3 \quad
\adjustbox{valign=m}{
\begin{xy}
(  0,  0 )*{} = "root";
( -4, -6 )*{\ast} = "l";
(  4, -6 )*{} = "r";
{ \ar@{-} "root"; "l" };
{ \ar@{-} "root"; "r" };
(  0, -12 )*{\ast} = "rl";
(  8, -12 )*{\ast} = "rr";
{ \ar@{-} "r"; "rl" };
{ \ar@{-} "r"; "rr" };
\end{xy}
}
\quad = \quad
\adjustbox{valign=m}{
\begin{xy}
(  0,  0 )*{} = "root";
( -6, -6 )*{} = "l";
(  6, -6 )*{} = "r";
{ \ar@{-} "root"; "l" };
{ \ar@{-} "root"; "r" };
( -10, -12 )*{\ast} = "ll";
( -2, -12 )*{\ast} = "lr";
{ \ar@{-} "l"; "ll" };
{ \ar@{-} "l"; "lr" };
(  2, -12 )*{} = "rl";
( 10, -12 )*{\ast} = "rr";
{ \ar@{-} "r"; "rl" };
{ \ar@{-} "r"; "rr" };
( -2, -18 )*{\ast} = "rll";
(  6, -18 )*{} = "rlr";
{ \ar@{-} "rl"; "rll" };
{ \ar@{-} "rl"; "rlr" };
(  2, -24 )*{\ast} = "rlrl";
( 10, -24 )*{\ast} = "rlrr";
{ \ar@{-} "rlr"; "rlrl" };
{ \ar@{-} "rlr"; "rlrr" };
\end{xy}
}
\]
In terms of association types, $\mu_{n_1}(t_1) \, \circ_i \, \mu_{n_2}(t_2)$ corresponds to
substitution of $( \, \mu_{n_2}(t_2) \, )$ for argument $i$ of $\mu_{n_1}(t_1)$;
we omit the parentheses if $n_2 = 1$.
For example,
$
(\ast\ast)(\ast\ast) \, \circ_3 \, \ast(\ast\ast)
\, = \,
(\ast\ast)((\ast(\ast\ast))\ast)
$.

Partial composition is nonassociative but satisfies
sequential and parallel axioms \cite[Definition 3.2.2.3]{BD} (see also \cite{MSS, LV}).
We state these axioms following \cite[Definition 11, Figure 1]{Markl}.
If $f \in T_m$, $g \in T_n$, $h \in T_p$ then
\[
( f \circ_j g ) \circ_i h
=
\begin{cases}
( f \circ_i h ) \circ_{j+p-1} g &\qquad 1 \le i \le j-1
\\
f \circ_j ( g \circ_{i-j+1} h ) &\qquad j \le i \le n+j-1
\\
( f \circ_{i-n+1} h ) \circ_j g &\qquad n+j \le i \le m+n-1
\end{cases}.
\]

Let $T$ denote the disjoint union of the $T_n$ for $n \ge 1$:
\[
T = \bigsqcup_{n \ge 1} T_n.
\]
The set $T$ together with all partial compositions is isomorphic to
the free nonsymmetric set operad generated by one binary operation
$\omega$ corresponding to the tree with root and two leaves.
(Nonsymmetric means that we have not yet introduced the action of
the symmetric group on the arguments.)
Let $T(n)$ denote the vector space with basis $T_n$.
On the direct sum
\[
\mathcal{T} = \bigoplus_{n \ge 1} \, T(n),
\]
we extend partial compositions so that they are linear in each factor.
The vector space $\mathcal{T}$ together with the extended partial compositions
is isomorphic to the free nonsymmetric vector operad generated by $\omega$.

%%%%%%%%%%%%%%%%%%%%%%%%%%%%%%%%%%%%%%%%%%%%%%%%%%%%%%%%%%%%%%%%%%%%%%%%%%%%%%%%%%%%%%%%%%%%%%%%

\subsection{The free symmetric operad}

Consider an integer $n \ge 1$ and the set of indeterminates $\{ x_1, \dots, x_n \}$.
We write $S_n$ for the symmetric group of all $n!$ permutations of $\{ 1, \dots, n \}$.
For each $\alpha \in S_n$ and $t \in T_n$ we obtain the labelled tree $\alpha t$
consisting of $t$ with leaves labelled $\alpha(1), \dots, \alpha(n)$ from left to right.
We write $LT_n$ for the set of all such labelled trees.
Similarly, we apply the association type $\mu_n(t)$ for $t \in T_n$ to
the multilinear associative monomial $x_{\alpha(1)} \cdots x_{\alpha(n)}$
and obtain the nonassociative monomial $\alpha\mu_n(t)$.
We write $LU_n$ for the set of all such nonassociative monomials.
The bijection $\mu_n\colon T_n \to U_n$ extends in the obvious way to the bijection
$\lambda\mu_n\colon LT_n \to LU_n$.
For example,
\[
\adjustbox{valign=m}{
\begin{xy}
(  0,  0 )*{} = "root";
( -6, -6 )*{} = "l";
(  6, -6 )*{} = "r";
{ \ar@{-} "root"; "l" };
{ \ar@{-} "root"; "r" };
( -10, -12 )*+{6} = "ll";
( -2, -12 )*+{5} = "lr";
{ \ar@{-} "l"; "ll" };
{ \ar@{-} "l"; "lr" };
(  2, -12 )*{} = "rl";
( 10, -12 )*+{1} = "rr";
{ \ar@{-} "r"; "rl" };
{ \ar@{-} "r"; "rr" };
( -2, -18 )*+{4} = "rll";
(  6, -18 )*{} = "rlr";
{ \ar@{-} "rl"; "rll" };
{ \ar@{-} "rl"; "rlr" };
(  2, -24 )*+{2} = "rlrl";
( 10, -24 )*+{3} = "rlrr";
{ \ar@{-} "rlr"; "rlrl" };
{ \ar@{-} "rlr"; "rlrr" };
\end{xy}
}
\quad \xrightarrow{\quad \scalebox{1}{$\lambda\mu_6$} \quad} \quad
(x_6x_5)((x_4(x_2x_3))x_1)
\]

We extend partial compositions to labelled trees.
Consider two labelled trees $\alpha t \in LT_m$ and $\beta u \in LT_n$.
If $1 \le i \le m$ then the partial composition $\alpha t \circ_i \beta u \in LT_{m{+}n{-}1}$
must be a tree with $m{+}n{-}1$ leaves labelled by a permutation in $S_{m{+}n{-}1}$.
(Simple grafting of one labelled tree onto the other does not produce a permutation.)
This must be done in a manner which is equivariant with respect to the action of
the symmetric group.
Following \cite[Definion 1.37]{MSS} with minor changes, we have:
\begin{itemize}
\item
A leaf of $\alpha t$ with label $j$ for $1 \le j \le \alpha(i)-1$ retains its label.
\item
A leaf of $\beta u$ with label $j$ for $1 \le j \le n$ is relabelled $j+\alpha(i)-1$.
\item
A leaf of $\alpha t$ with label $j$ for $\alpha(i)+1 \le j \le m$ is relabelled $j+n-1$.
\end{itemize}
For example,
\[
\adjustbox{valign=m}{
\begin{xy}
(  0,  0 )*{} = "root";
( -6, -6 )*{} = "l";
(  6, -6 )*{} = "r";
{ \ar@{-} "root"; "l" };
{ \ar@{-} "root"; "r" };
( -10, -12 )*+{4} = "ll";
( -2, -12 )*+{3} = "lr";
{ \ar@{-} "l"; "ll" };
{ \ar@{-} "l"; "lr" };
(  2, -12 )*+{2} = "rl";
( 10, -12 )*+{1} = "rr";
{ \ar@{-} "r"; "rl" };
{ \ar@{-} "r"; "rr" };
\end{xy}
}
\quad \circ_3 \quad
\adjustbox{valign=m}{
\begin{xy}
(  0,  0 )*{} = "root";
( -4, -6 )*+{3} = "l";
(  4, -6 )*{} = "r";
{ \ar@{-} "root"; "l" };
{ \ar@{-} "root"; "r" };
(  0, -12 )*+{1} = "rl";
(  8, -12 )*+{2} = "rr";
{ \ar@{-} "r"; "rl" };
{ \ar@{-} "r"; "rr" };
\end{xy}
}
\quad = \quad
\adjustbox{valign=m}{
\begin{xy}
(  0,  0 )*{} = "root";
( -6, -6 )*{} = "l";
(  6, -6 )*{} = "r";
{ \ar@{-} "root"; "l" };
{ \ar@{-} "root"; "r" };
( -10, -12 )*+{6} = "ll";
( -2, -12 )*+{5} = "lr";
{ \ar@{-} "l"; "ll" };
{ \ar@{-} "l"; "lr" };
(  2, -12 )*{} = "rl";
( 10, -12 )*+{1} = "rr";
{ \ar@{-} "r"; "rl" };
{ \ar@{-} "r"; "rr" };
( -2, -18 )*+{4} = "rll";
(  6, -18 )*{} = "rlr";
{ \ar@{-} "rl"; "rll" };
{ \ar@{-} "rl"; "rlr" };
(  2, -24 )*+{2} = "rlrl";
( 10, -24 )*+{3} = "rlrr";
{ \ar@{-} "rlr"; "rlrl" };
{ \ar@{-} "rlr"; "rlrr" };
\end{xy}
}
\]

Let $LT(n)$ denote the $S_n$-module with linear basis $LT_n$;
we use the natural left action on labels (not on positions).
The direct sum of these $S_n$-modules,
\[
\mathcal{LT} = \bigoplus_{n \ge 1} LT(n),
\]
together with the bilinear extension of the partial compositions,
is isomorphic to the free symmetric vector operad generated by $\omega$.
(This binary operation has no symmetry: it is neither commutative nor anticommutative.)

An ideal $\mathcal{I}$ in the free symmetric operad $\mathcal{LT}$ is a
graded subspace (that is, $\mathcal{I}(n) \subseteq \mathcal{LT}(n)$ for $n \ge 1$)
such that
\begin{itemize}[leftmargin=*]
\item
$S_n \cdot \mathcal{I}(n) = \mathcal{I}(n)$:
each homogeneous space $\mathcal{I}(n)$ is an $S_n$-module
(that is, closed under the action of the symmetric group), and
\item
if $f \in \mathcal{I}(m)$ and $g \in \mathcal{LT}(n)$
then $f \circ_i g$ ($1 \le i \le m$) and $g \circ_j f$ ($1 \le j \le n$)
belong to $\mathcal{I}(m{+}n{-}1)$
(that is, $\mathcal{I}$ is closed under partial compositions).
\end{itemize}
The ideal $\langle f_1, f_2, \dots \rangle \subseteq \mathcal{LT}$ generated
by homogeneous elements $f_1, f_2, \dots$ is the smallest ideal
of $\mathcal{LT}$ containing $f_1, f_2, \dots$
If $\mathcal{I} = \langle f_1, f_2, \dots \rangle$ then we say that
$\mathcal G = \{ f_1, f_2, \dots \}$ is a minimal set of generators for $\mathcal{I}$ if
no proper subset of $\mathcal G$ generates $\mathcal{I}$;
this condition does not uniquely determine $\mathcal G$.

%%%%%%%%%%%%%%%%%%%%%%%%%%%%%%%%%%%%%%%%%%%%%%%%%%%%%%%%%%%%%%%%%%%%%%%%%%%%%%%%%%%%%%%%%%%%%%%%

\subsection{Associativity, nullary operations, and the expansion map}

In general, an $n$-ary operation ($n \ge 0$) on a vector space $V$ is a multilinear map
$f\colon V^n \to V$.
For $n = 1$ we have $V^1 = V$ so a unary operation is simply a linear operator on $V$;
for $n = 0$ we have $V^0 = \mathbb{F}$ so a nullary operation is equivalent to the choice
of a constant vector $f(1) \in V$.
If we write $\mathrm{End}_n(V)$ for the vector space of all $n$-ary operations on $V$,
then the direct sum $\mathrm{End}(V) = \bigoplus_{n \ge 0} \mathrm{End}_n(V)$,
together with partial compositions (substitution of the output of one operation for
an input of another operation), is the \emph{endomorphism operad} of $V$.

Let $p$, $q$ be symbols denoting nullary operations on some underlying vector space.
For $n \ge 1$, consider monomials $v_1 v_2 \cdots v_{2n-1}$ with an odd number of factors
such that the $n$ odd-indexed factors $v_{2i-1}$ ($1 \le i \le n$) form
a multilinear associative monomial $x_{\alpha(1)} \cdots x_{\alpha(n)}$ for some
$\alpha \in S_n$, and each of the $n-1$ even-indexed factors is either $p$ or $q$.
We write $W_n$ for the set of all such monomials;
$S_n$ acts by permuting the odd-indexed factors.
For $v \in W_m$ and $w \in W_n$, we define $v \circ_i w \in W_{m+n-1}$ for $1 \le i \le m$
by substituting $w$ for $v_{2i-1}$ (with the appropriate change of labels).
We write $W(n)$ for the vector space whose basis consists of all such monomials.
The direct sum $\mathcal{W} = \bigoplus_{n \ge 1} W(n)$ is a suboperad of
the symmetric associative operad with two nullary operations.

\begin{definition}
The \emph{expansion map} $X_n \colon LT(n) \to W(n)$ on monomials
$\alpha t \in LT_n$ is defined recursively.
For a leaf with label $i$, we set $X_n( i ) = x_i$.
If $tu$ denotes an internal node with left and right subtrees
$t \in LT_r$ and $u \in LT_s$ with $r+s = n$ then
\[
X_n( tu ) = X_r(t) p X_s(u) -X_s(u) q X_r(t).
\]
That is, we replace each internal node by the operation $\ast_{pq}$.
\end{definition}

If we represent trees by nonassociative monomials and leaf labels by letters then
$X_2( ab ) = apb - bqa$ and
\begin{equation}
\label{X3}
\begin{array}{l}
X_3( (ab)c ) = ( apb {-} bqa ) p c - c q ( apb {-} bqa ) = apbpc - bqapc - cqapb + cqbqa,
\\[2pt]
X_3( a(bc) ) = a p ( bpc {-} cqb ) - ( bpc {-} cqb ) q a = apbpc - apcqb - bpcqa + cqbqa.
\end{array}
\end{equation}
For the expansions in degree 4, see Figure \ref{degree4expansions}.

\begin{figure}[ht]
$\boxed{
\begin{array}{r}
((ab)c)d
\;\longmapsto\;
  a p b p c p d
- d q a p b p c
- c q a p b p d
+ d q c q a p b \quad {}
\\
{}
- b q a p c p d
+ d q b q a p c
+ c q b q a p d
- d q c q b q a,
\\
(a(bc))d
\;\longmapsto\;
  a p b p c p d
- d q a p b p c
- b p c q a p d
+ d q b p c q a \quad {}
\\
{}
- a p c q b p d
+ d q a p c q b
+ c q b q a p d
- d q c q b q a,
\\
(ab)(cd)
\;\longmapsto\;
  a p b p c p d
- c p d q a p b
- a p b p d q c
+ d q c q a p b \quad {}
\\
{}
- b q a p c p d
+ c p d q b q a
+ b q a p d q c
- d q c q b q a,
\\
a((bc)d)
\;\longmapsto\;
  a p b p c p d
- b p c p d q a
- a p d q b p c
+ d q b p c q a \quad {}
\\
{}
- a p c q b p d
+ c q b p d q a
+ a p d q c q b
- d q c q b q a,
\\
a(b(cd))
\;\longmapsto\;
  a p b p c p d
- b p c p d q a
- a p c p d q b
+ c p d q b q a \quad {}
\\
{}
- a p b p d q c
+ b p d q c q a
+ a p d q c q b
- d q c q b q a.
\end{array}
}$
\vspace{-7pt}
\caption{Expansions of basis monomials in degree 4}
\label{degree4expansions}
\end{figure}

\begin{definition}
\label{kernel}
For each $n \ge 1$, the expansion map $X_n\colon \mathcal{LT}(n) \to \mathcal{W}(n)$
is a morphism of $S_n$-modules; we write $\mathcal{K}(n) = \ker(X_n)$.
Combining all the expansion maps we obtain the morphism of operads
$X \colon \mathcal{LT} \to \mathcal{W}$ with kernel
$\mathcal{K} = \bigoplus_{n \ge 1} \mathcal{K}(n)$.
The polynomial identities satisfied by $\ast_{pq}$
for all associative algebras $A$ and all $p, q \in A$
coincide with $\mathcal{K}$, which is an operad ideal in $\mathcal{LT}$.
These identities are the linear dependence relations
among the expansions of the nonassociative monomials.
We refer to $\mathcal{K}(n)$ as the $S_n$-module of \emph{all identities} in degree $n$.
\end{definition}

Our ultimate goal is to determine a set of generators for $\mathcal{K}$.

%%%%%%%%%%%%%%%%%%%%%%%%%%%%%%%%%%%%%%%%%%%%%%%%%%%%%%%%%%%%%%%%%%%%%%%%%%%%%%%%%%%%%%%%%%%%%%%%
%%%%%%%%%%%%%%%%%%%%%%%%%%%%%%%%%%%%%%%%%%%%%%%%%%%%%%%%%%%%%%%%%%%%%%%%%%%%%%%%%%%%%%%%%%%%%%%%
%%%%%%%%%%%%%%%%%%%%%%%%%%%%%%%%%%%%%%%%%%%%%%%%%%%%%%%%%%%%%%%%%%%%%%%%%%%%%%%%%%%%%%%%%%%%%%%%

\section{Polynomial Identities in Degree $n \le 3$}

\begin{definition}
\label{lieadmissible}
In a nonassociative algebra, the \emph{Lie-admissible identity} is
\[
L(a,b,c) = \sum_{\sigma \in S_3} \varepsilon(\sigma) \big( (a^\sigma b^\sigma )c^\sigma  - a^\sigma (b^\sigma c^\sigma ) \big),
\]
where $\varepsilon\colon S_3 \to \{\pm1\}$ is the sign homomorphism.
If $L(a,b,c) \equiv 0$ then the commutator $xy - yx$ satisfies the Jacobi identity.
\end{definition}

We provide a different proof of the next result using elementary linear algebra.

\begin{theorem} \emph{\cite{M}.}
%\emph{Montaner \cite{M}.}
Over a field of characteristic 0,
every multilinear polynomial identity in degree $n \le 3$
satisfied by every mutation of every associative algebra is
a consequence of the Lie-admissible identity.
\end{theorem}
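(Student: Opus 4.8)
The plan is to show that the $S_3$-module $\mathcal{K}(3)$ of all degree-$3$ identities is one-dimensional and spanned by the Lie-admissible element $L(a,b,c)$; the theorem then follows because in degree $3$ a ``consequence of $L$'' means nothing more than membership in the $S_3$-submodule of $\mathcal{LT}(3)$ generated by $L$. First I would dispose of the lower degrees: the expansions $X_1(i)=x_i$ and $X_2(ab)=x_1px_2-x_2qx_1$, $X_2(ba)=x_2px_1-x_1qx_2$ have disjoint supports in $\mathcal{W}(1)$ and $\mathcal{W}(2)$, so $X_1,X_2$ are injective and $\mathcal{K}(1)=\mathcal{K}(2)=0$. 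Hence there are no lower-degree identities to lift, and the degree-$3$ part of the operad ideal $\langle L\rangle$ is simply $S_3\cdot\mathbb{F}L$. Since a direct computation gives $\sigma\cdot L=\epsilon(\sigma)L$, this submodule is the one-dimensional sign representation $\mathbb{F}L$, and the theorem becomes equivalent to the single statement $\mathcal{K}(3)=\mathbb{F}L$.

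Next I would set up the linear algebra explicitly. The domain $\mathcal{LT}(3)$ has as a basis the $12$ labelled trees $\alpha t$ with $t\in\{(\ast\ast)\ast,\ \ast(\ast\ast)\}$ and $\alpha\in S_3$, while the codomain $\mathcal{W}(3)$ has as a basis the $24$ monomials $x_i\,e_1\,x_j\,e_2\,x_k$ with $(i,j,k)$ a permutation of $(1,2,3)$ and $e_1,e_2\in\{p,q\}$. Using \eqref{X3} together with the $S_3$-equivariance of $X_3$, I would record the $12$ expansions as the rows of a $12\times 24$ matrix $M$ over $\mathbb{Q}$. Then $\mathcal{K}(3)$ is the space of linear relations among these rows, of dimension $12-\mathrm{rank}(M)$, and the entire problem reduces to proving $\mathrm{rank}(M)=11$.

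The key simplification I would exploit is that the connector word $e_1e_2\in\{pp,pq,qp,qq\}$ is preserved by the $S_3$-action on labels, so $\mathcal{W}(3)$ splits $S_3$-equivariantly into four six-dimensional blocks. Writing a candidate relation as $\sum_\sigma(a_\sigma A_\sigma+b_\sigma B_\sigma)=0$, where $A_\sigma,B_\sigma$ denote the images under $\sigma$ of the two tree-types, the $pp$- and $qq$-blocks (each a bijective image of $S_3$) immediately force $a_\sigma+b_\sigma=0$ for every $\sigma$, while the $qp$-block (fed only by the $A_\sigma$) and the $pq$-block (fed only by the $B_\sigma$) impose the pairing relations $a_\tau+a_{\tau'}=0$ for the appropriate transposed index pairs. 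Solving this small system leaves the one-parameter family $a_\sigma=\epsilon(\sigma)$, $b_\sigma=-\epsilon(\sigma)$, which is exactly the coefficient vector of $L(a,b,c)$. I expect the main obstacle to be purely bookkeeping: correctly tracking how each monomial's letter-permutation is transposed inside the $qp$- and $pq$-blocks, since a sign or index slip there would corrupt the count. The payoff is that the unique surviving relation is the alternating sum, pinning the generator down to $L$ itself.

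Finally I would confirm independently that $L\in\mathcal{K}(3)$ by expanding $X_3(L)$ through \eqref{X3} and checking that it vanishes termwise, which also shows $L\neq 0$. Combined with $\dim\mathcal{K}(3)=12-11=1$, this forces $\mathcal{K}(3)=\mathbb{F}L$, so every multilinear degree-$3$ identity is a scalar multiple of $L$ and hence a consequence of the Lie-admissible identity.
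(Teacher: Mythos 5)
Your proposal is correct and follows essentially the same route as the paper: set up the $12$-dimensional basis of $\mathcal{LT}(3)$ and the $24$-dimensional basis of $\mathcal{W}(3)$, represent $X_3$ by the corresponding matrix (the paper's $E_3$, your transpose of it), and show the nullspace is one-dimensional and spanned by the coefficient vector of $L$. The one genuine addition is that where the paper simply asserts ``it is easy to check that this matrix has rank 11,'' you carry out that check by hand via the $S_3$-stable splitting of $\mathcal{W}(3)$ into the four blocks $pp$, $pq$, $qp$, $qq$ --- correctly observing that the $pp$ and $qq$ blocks force $a_\sigma+b_\sigma=0$ while the $qp$ and $pq$ blocks force $a_\sigma+a_{\sigma(23)}=0$ and $b_\sigma+b_{\sigma(12)}=0$, which together pin the solution to the sign character --- and you also make explicit the (implicit in the paper) point that, since $\mathcal{K}(1)=\mathcal{K}(2)=0$ and $L$ is skew, the degree-$3$ component of the ideal $\langle L\rangle$ is exactly $\mathbb{F}L$.
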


\begin{proof}
It is straightforward to verify that $\ker X_n = \{0\}$ for $1 \le n \le 2$.
The monomial basis of $\ff(3)$ consists of 12 elements ordered first by association type
and then by permutation of the variables:
\begin{equation}
\label{LT3basis}
\begin{array}{l}
(ab)c, \quad (ac)b, \quad (ba)c, \quad (bc)a, \quad (ca)b, \quad (cb)a,
\\
a(bc), \quad a(cb), \quad b(ac), \quad b(ca), \quad c(ab), \quad c(ba).
\end{array}
\end{equation}
The monomial basis of $\pq(3)$ consists of 24 elements ordered first by lex order
of the pair of nullary operations ($pp$, $pq$, $qp$, $qq$) and then by permutation of the variables:
\begin{equation}
\label{W3basis}
\begin{array}{l@{\quad}l@{\quad}l@{\quad}l@{\quad}l@{\quad}l}
apbpc, & apcpb, & bpapc, & bpcpa, & cpapb, & cpbpa,
\\
apbqc, & apcqb, & bpaqc, & bpcqa, & cpaqb, & cpbqa,
\\
aqbpc, & aqcpb, & bqapc, & bqcpa, & cqapb, & cqbpa,
\\
aqbqc, & aqcqb, & bqaqc, & bqcqa, & cqaqb, & cqbqa.
\end{array}
\end{equation}
The expansion map $X_3\colon \ff(3) \to \pq(3)$ is determined by its values on
the nonassociative monomials with the identity permutation of the arguments;
see \eqref{X3}.
We apply all permutations in $S_3$ to the arguments $a, b, c$
and store the coefficients of the monomials in the $24 \times 12$ matrix $E_3$
representing $X_3$ with respect to the ordered bases; see Figure \ref{E3matrix}.
That is, the $(i,j)$ entry of $E_3$ is the coefficient of the $i$th associative monomial
\eqref{W3basis} in the expansion of the $j$th nonassociative monomial \eqref{LT3basis}.
It is easy to check that this matrix has rank 11 and hence nullity 1,
and that a basis for its nullspace is the coefficient vector of the Lie-admissible identity.
\end{proof}

\begin{figure}[ht]
\small
$
\left[
\begin{array}{rrrrrrrrrrrr}
 1 &  \cdot &  \cdot &  \cdot &  \cdot &  \cdot &  1 &  \cdot &  \cdot &  \cdot &  \cdot &  \cdot \\[-2pt]
 \cdot &  1 &  \cdot &  \cdot &  \cdot &  \cdot &  \cdot &  1 &  \cdot &  \cdot &  \cdot &  \cdot \\[-2pt]
 \cdot &  \cdot &  1 &  \cdot &  \cdot &  \cdot &  \cdot &  \cdot &  1 &  \cdot &  \cdot &  \cdot \\[-2pt]
 \cdot &  \cdot &  \cdot &  1 &  \cdot &  \cdot &  \cdot &  \cdot &  \cdot &  1 &  \cdot &  \cdot \\[-2pt]
 \cdot &  \cdot &  \cdot &  \cdot &  1 &  \cdot &  \cdot &  \cdot &  \cdot &  \cdot &  1 &  \cdot \\[-2pt]
 \cdot &  \cdot &  \cdot &  \cdot &  \cdot &  1 &  \cdot &  \cdot &  \cdot &  \cdot &  \cdot &  1 \\[-2pt]
 \cdot &  \cdot &  \cdot &  \cdot &  \cdot &  \cdot &  \cdot & -1 &  \cdot &  \cdot & -1 &  \cdot \\[-2pt]
 \cdot &  \cdot &  \cdot &  \cdot &  \cdot &  \cdot & -1 &  \cdot & -1 &  \cdot &  \cdot &  \cdot \\[-2pt]
 \cdot &  \cdot &  \cdot &  \cdot &  \cdot &  \cdot &  \cdot &  \cdot &  \cdot & -1 &  \cdot & -1 \\[-2pt]
 \cdot &  \cdot &  \cdot &  \cdot &  \cdot &  \cdot & -1 &  \cdot & -1 &  \cdot &  \cdot &  \cdot \\[-2pt]
 \cdot &  \cdot &  \cdot &  \cdot &  \cdot &  \cdot &  \cdot &  \cdot &  \cdot & -1 &  \cdot & -1 \\[-2pt]
 \cdot &  \cdot &  \cdot &  \cdot &  \cdot &  \cdot &  \cdot & -1 &  \cdot &  \cdot & -1 &  \cdot \\[-2pt]
 \cdot &  \cdot & -1 & -1 &  \cdot &  \cdot &  \cdot &  \cdot &  \cdot &  \cdot &  \cdot &  \cdot \\[-2pt]
 \cdot &  \cdot &  \cdot &  \cdot & -1 & -1 &  \cdot &  \cdot &  \cdot &  \cdot &  \cdot &  \cdot \\[-2pt]
-1 & -1 &  \cdot &  \cdot &  \cdot &  \cdot &  \cdot &  \cdot &  \cdot &  \cdot &  \cdot &  \cdot \\[-2pt]
 \cdot &  \cdot &  \cdot &  \cdot & -1 & -1 &  \cdot &  \cdot &  \cdot &  \cdot &  \cdot &  \cdot \\[-2pt]
-1 & -1 &  \cdot &  \cdot &  \cdot &  \cdot &  \cdot &  \cdot &  \cdot &  \cdot &  \cdot &  \cdot \\[-2pt]
 \cdot &  \cdot & -1 & -1 &  \cdot &  \cdot &  \cdot &  \cdot &  \cdot &  \cdot &  \cdot &  \cdot \\[-2pt]
 \cdot &  \cdot &  \cdot &  \cdot &  \cdot &  1 &  \cdot &  \cdot &  \cdot &  \cdot &  \cdot &  1 \\[-2pt]
 \cdot &  \cdot &  \cdot &  1 &  \cdot &  \cdot &  \cdot &  \cdot &  \cdot &  1 &  \cdot &  \cdot \\[-2pt]
 \cdot &  \cdot &  \cdot &  \cdot &  1 &  \cdot &  \cdot &  \cdot &  \cdot &  \cdot &  1 &  \cdot \\[-2pt]
 \cdot &  1 &  \cdot &  \cdot &  \cdot &  \cdot &  \cdot &  1 &  \cdot &  \cdot &  \cdot &  \cdot \\[-2pt]
 \cdot &  \cdot &  1 &  \cdot &  \cdot &  \cdot &  \cdot &  \cdot &  1 &  \cdot &  \cdot &  \cdot \\[-2pt]
 1 &  \cdot &  \cdot &  \cdot &  \cdot &  \cdot &  1 &  \cdot &  \cdot &  \cdot &  \cdot &  \cdot
\end{array} \right]
$
\vspace{-2mm}
\caption{The matrix $E_3$ representing the expansion map $X_3$ (here a dot represents a zero entry)}
\label{E3matrix}
\end{figure}

%%%%%%%%%%%%%%%%%%%%%%%%%%%%%%%%%%%%%%%%%%%%%%%%%%%%%%%%%%%%%%%%%%%%%%%%%%%%%%%%%%%%%%%%%%%%%%%%
%%%%%%%%%%%%%%%%%%%%%%%%%%%%%%%%%%%%%%%%%%%%%%%%%%%%%%%%%%%%%%%%%%%%%%%%%%%%%%%%%%%%%%%%%%%%%%%%
%%%%%%%%%%%%%%%%%%%%%%%%%%%%%%%%%%%%%%%%%%%%%%%%%%%%%%%%%%%%%%%%%%%%%%%%%%%%%%%%%%%%%%%%%%%%%%%%

\section{Polynomial Identities in Degree $4$}

Montaner \cite{M} (see also \cite[Chapter 5]{EMbook}) showed that every identity
in degree $n \le 4$ satisfied by every mutation algebra is a consequence of
the Lie-admissible identity, the Jordan-admissible identity, and two further
identities; furthermore, none of these identities is a consequence of the other three.
In this section we use computer algebra to simplify this result:
we discover two new multilinear identities in degree 4,
which are not consequences of the Lie-admissible identity,
and which generate all identities in degree 4 (including the Jordan-admissible identity).

\begin{definition}
\label{defJ}
In a nonassociative algebra, the \emph{linearized Jordan identity} is
\[
  ( ( b c ) a ) d
+ ( ( b d ) a ) c
+ ( ( c d ) a ) b
- ( a b ) ( c d )
- ( a c ) ( b d )
- ( a d ) ( b c ).
\]
If we expand each nonassociative product $xy$ as the anticommutator $xy + yx$
then we obtain the \emph{Jordan-admissible identity}:
\[
\begin{array}{l}
J(a,b,c,d) =
  ((bc)a)d
+ ((bd)a)c
+ ((cb)a)d
+ ((cd)a)b
+ ((db)a)c
+ ((dc)a)b
\\ {}
+ (a(bc))d
+ (a(bd))c
+ (a(cb))d
+ (a(cd))b
+ (a(db))c
+ (a(dc))b
- (ab)(cd)
\\ {}
- (ab)(dc)
- (ac)(bd)
- (ac)(db)
- (ad)(bc)
- (ad)(cb)
- (ba)(cd)
- (ba)(dc)
\\ {}
- (bc)(ad)
- (bc)(da)
- (bd)(ac)
- (bd)(ca)
- (ca)(bd)
- (ca)(db)
- (cb)(ad)
\\ {}
- (cb)(da)
- (cd)(ab)
- (cd)(ba)
- (da)(bc)
- (da)(cb)
- (db)(ac)
- (db)(ca)
\\ {}
- (dc)(ab)
- (dc)(ba)
+ b((cd)a)
+ b((dc)a)
+ c((bd)a)
+ c((db)a)
+ d((bc)a)
\\ {}
+ d((cb)a)
+ b(a(cd))
+ b(a(dc))
+ c(a(bd))
+ c(a(db))
+ d(a(bc))
+ d(a(cb)).
\end{array}
\]
If $J(a,b,c,d) \equiv 0$ then the anticommutator $xy + yx$ satisfies the Jordan identity.
\end{definition}

\begin{definition}
\label{defHI}
In a nonassociative algebra, we consider the following identities
where $(x,y,z) = (xy)z - x(yz)$ and $x \circ y = xy + yx$:
\begin{align*}
H(a,b,c,d)
&=
\big(
(a,c,b) + (b,a,c) + (c,b,a)
\big)d
-
\!\!
\sum_{\sigma \in S_3}
\big(
( a^\sigma b^\sigma ) ( c^\sigma d )
-
a^\sigma ( ( b^\sigma c^\sigma ) d )
\big),
\\
I(a,b,c,d)
&=
(bc,a,d) - (a,bc,d) + (a,d,bc)
+ ( b, a \circ d, c ) - (b,d,c) \circ a
\\
& - (b,a,c) \circ d.
\end{align*}
In identity $H$, the first three terms include a cyclic sum of associators
\cite[Equation (5)]{KKOR},
and each term in the summation can be written as the difference of two associators.
\end{definition}

The next result improves \cite[Theorem 2.3]{M}:
we have only two new identities in degree 4, not three.
In addition, even though our new identities $H$ and $I$ are multilinear,
each contains only 18 nonassociative monomials
(after expanding the associators and anticommutators),
whereas the new identities of \cite{M} have
48 monomials (the Jordan-admissible identity),
20 monomials (identity $E$),
and 52 monomials (identity $K$). Furthermore, our new identities have only coefficients $\pm 1$,
whereas identity $K$ has coefficients 1, 2, 4, 6. Finally, we show that the new identities also generate the consequences of the Lie-admissible identity, and thus the latter do not need to be considered.

\begin{theorem} \label{degree4}
Every identity in degree 4 satisfied by every mutation algebra follows from
the identities $H$ and $I$ from Definition \ref{defHI}.
\end{theorem}

\begin{proof}
We first consider the expansion matrix.
The monomial basis of $\ff(4)$ consists of 120 elements ordered first by association type
and then by lex order of permutations $\sigma \in S_4$ (indicated by superscripts):
\begin{equation}
\label{LT4basis}
( ( a^\sigma b^\sigma ) c^\sigma ) d^\sigma, \;\;
( a^\sigma ( b^\sigma c^\sigma ) ) d^\sigma, \;\;
( a^\sigma b^\sigma ) ( c^\sigma d^\sigma ), \;\;
a^\sigma ( ( b^\sigma c^\sigma ) d^\sigma ), \;\;
a^\sigma ( b^\sigma ( c^\sigma d^\sigma ) ).
\end{equation}
The monomial basis of $\pq(4)$ consists of 192 elements ordered first by lex order of
the triple of nullary operations and then by lex order of permutations $\sigma \in S_4$:
\begin{equation}
\label{W4basis}
\begin{array}{l@{\quad}l@{\quad}l@{\quad}l}
a^\sigma p \, b^\sigma p \, c^\sigma p \, d^\sigma, &
a^\sigma p \, b^\sigma p \, c^\sigma q \, d^\sigma, &
a^\sigma p \, b^\sigma q \, c^\sigma p \, d^\sigma, &
a^\sigma p \, b^\sigma q \, c^\sigma q \, d^\sigma,
\\
a^\sigma q \, b^\sigma p \, c^\sigma p \, d^\sigma, &
a^\sigma q \, b^\sigma p \, c^\sigma q \, d^\sigma, &
a^\sigma q \, b^\sigma q \, c^\sigma p \, d^\sigma, &
a^\sigma q \, b^\sigma q \, c^\sigma q \, d^\sigma.
\end{array}
\end{equation}
The expansion map $X_4\colon \ff(4) \to \pq(4)$ is determined by its values on
the nonassociative monomials with the identity permutation of the arguments
(Figure \ref{degree4expansions}).
We apply all permutations in $S_4$ to the arguments $a, b, c, d$ in the expansions
and store the coefficients in the $192 \times 120$ matrix $E_4$ representing $X_4$
with respect to the ordered bases \eqref{LT4basis} and \eqref{W4basis}.
The $(i, j)$ entry of $E_4$ is the coefficient of the $i$th associative
monomial in the expansion of the $j$th nonassociative monomial.
Thus each column of $E_4$ contains 1 and $-1$ each four times.

Next, we consider the consequences of the Lie-admissible identity.
The identity $L(a,b,c) \in \mathcal{LT}(3)$ is skew-symmetric:
\[
L( a^\sigma, b^\sigma, c^\sigma ) = \epsilon( \sigma ) L(a,b,c).
\]
We write $\mathcal{L} \subset \mathcal{LT}$ for the operad ideal generated by $L$;
clearly $\mathcal{L} \subseteq \mathcal{K}$.
The homogeneous component $\mathcal{L}(4)$ is generated as an $S_4$-module by
the partial compositions
\begin{equation}
\label{Lie4generators}
\begin{array}{l}
L \circ_1 \omega = L( \omega(a,b), c, d ) = L(ab,c,d),
\\
\omega \circ_1 L = \omega( L(a,b,c), d ) = L(a,b,c)d,
\\
\omega \circ_2 L = \omega( a, L(b,c,d) ) = aL(b,c,d).
\end{array}
\end{equation}
We refer to the elements of $\mathcal{L}(4)$ as the \emph{old identities} in degree 4.
Applying all permutations $\sigma \in S_4$ to the generators \eqref{Lie4generators}
allows us to represent $\mathcal{L}(4)$ as the row space of the $72 \times 120$ matrix $C_4$ whose columns are labelled by the monomials \eqref{LT4basis}.
The row space of $C_4$ is a subspace (in fact an $S_4$-submodule) of
the nullspace of the matrix $E_4$. We set $o_4 = \mathrm{rank}( C_4)$ and write $\overline{C}_4$ for the $o_4 \times 120$
matrix in RCF (row canonical form) whose row space equals that of $C_4$.

Finally, we consider the new identities.
The elements of the nullspace of $E_4$ are the coefficient vectors of $\mathcal{K}(4)$.
We set $a_4 = \mathrm{nullity}(E_4)$ and write $N_4$ for the $a_4 \times 120$ matrix
in RCF whose row space is the nullspace of $E_4$.
The rows of $N_4$ span the $S_4$-module of all identities in degree 4.
Clearly the row space of $\overline{C}_4$ is a subspace of the row space of $N_4$,
and hence $o_4 \le a_4$.
The quotient $\mathcal{K}(4) / \mathcal{L}(4)$ is the $S_4$-module of \emph{new identities}
in degree 4, and its dimension is $a_4 - o_4$.

Let $A, O \subseteq \{1,\dots,120\}$ be the column indices of the leading 1s in $N_4$ and
$\overline{C}_4$ respectively.
A linear basis of $\mathcal{K}(4) / \mathcal{L}(4)$ corresponds to (the cosets of) the rows
of $N_4$ whose leading 1s have column indices in $A \setminus O$.
It is straightforward using the module generators algorithm \cite{BMP}
to compute a subset of this linear basis which represents a set of
$S_4$-module generators for the quotient module.
Computations with the computer algebra system SageMath
show that $a_4 = 88$ and hence $N_4$ has rank $n_4 = 32$;
the nonzero entries of $N_4$ are $\pm \frac12, \pm 1, -\frac32$.
For each row of $N_4$, we multiply the coefficients by the LCM of their denominators
to obtain integers and then divide by the GCD of these integers to obtain vectors with relatively prime integer coefficients.
The squared Euclidean lengths of the resulting vectors with multiplicities in parentheses are
\[
12 \, (4), \; 18 \, (4), \; 42 \, (8), \; 48 \, (2), \; 56 \, (1), \; 60 \, (4), \; 64 \, (1), \; 72 \, (2), \; 74 \, (3), \; 82 \, (1), \; 100 \, (2).
\]
We sort the rows of the new integer matrix, also called $N_4$, by increasing length.
Further SageMath computations show that $o_4 = 19$, which implies that
the quotient module $\mathcal{K}(4)/\mathcal{L}(4)$ has dimension 13.

We next use the module generators algorithm again to
determine the smallest subset of the shortest rows of $N_4$ which generates
the quotient module $\mathcal{K}(4) / \mathcal{L}(4)$.
We obtain two identities and verify that neither is a consequence of the other.
The first has 18 terms and coefficients $\pm 1$ (squared length 18);
the second has 33 terms and coefficients $\pm 1, \pm 2$ (squared length 42).

We can obtain better results using linear algebra over the integers;
this requires replacing the RCF by the HNF (Hermite normal form),
and applying the LLL algorithm \cite{BP} to determine shorter integer vectors.

The entries of the matrix $E_4$ belong to $\{ 0, \pm 1 \}$.
We compute the HNF of the transpose $E_4^t$, denoted by $V$, and
a square matrix $U$ with $\det(U) = \pm 1$ such that $U E_4^t = V$.
Since $E_4^t$ has rank 88, the bottom 32 rows of $V$ are zero,
and hence the bottom 32 rows of $U$ form a matrix $N$
whose rows form a lattice basis of the left nullspace of $E_4^t$
(the right nullspace of $E_4$).
(By a lattice basis we mean a set of free generators for a submodule of a free $\mathbb{Z}$-module.)

After applying the LLL algorithm to the lattice generated by the rows of $N$,
we obtain a matrix $N_{LLL}$ whose nonzero entries are $\pm 1$ and whose rows have
the following squared Euclidean lengths with multiplicities given in parentheses:
\[
12 \, (13), \; 18 \, (12), \; 24 \, (1), \; 26 \, (1), \; 28 \, (1), \; 32 \, (3), \; 34 \, (1).
\]
Further computations show that the quotient module $\mathcal{K}(4)/\mathcal{L}(4)$
is generated by two rows of $N_{LLL}$ with squared length 18.
These are the coefficient vectors of the identities
$I(a,b,c,d)$ and $H(a,b,c,d)$.
Moreover, the $S_4$-module generated by $I(a,b,c,d)$ and $H(a,b,c,d)$ has dimension 32, so it coincides with $\mathcal K(4)$.
\end{proof}

\begin{remark}
Since the dimension of $\mathcal K(4)$ is 32 and the permutation of variables of a multilinear identity of degree $4$ can produce at most $4!=24$ linearly independent identities, a lower bound on the number of generators of $\mathcal K(4)$ as an $S_4$-module is $\lceil 32/24\rceil = 2$. Therefore the set of generators $\{H,I\}$ of $\mathcal K(4)$ has minimum cardinal.
\end{remark}

\begin{corollary}
\label{corjor}
Consider these three consequences of the Lie-admissible identity:
\[
P(a,b,c,d) = L(ab,c,d), \quad
Q(a,b,c,d) = L(a,b,c)d, \quad
R(a,b,c,d) = aL(b,c,d).
\]
Then
\begin{align*}
P(a,b,c,d) &= I(c,a,b,d)-I(d,a,b,c),
\\
Q(a,b,c,d) &= H(a,c,b,d)-H(a,b,c,d),
\\
2R(a,b,c,d) &= \sum_{\sigma\in S_3}\varepsilon(\sigma)\big(H(a, b^\sigma, c^\sigma, d^\sigma)+I(a, b^\sigma, c^\sigma, d^\sigma) + I(c^\sigma, a ,b^\sigma,d^\sigma)\big)
\\
& \, + \sum_{\sigma\in S_2} \varepsilon(\sigma)H(b,c^\sigma,d^\sigma,a),
\\
2J(a,b,c,d) &= \sum_{\sigma\in S_3}\big(H(a, b^\sigma, c^\sigma, d^\sigma)+I(a, b^\sigma, c^\sigma, d^\sigma)+I(c^\sigma,a,b^\sigma,d^\sigma)\big)
\\
& \, +\sum_{\sigma\in S_2} H(b,c^\sigma,d^\sigma,a),
\end{align*}
where $J(a,b,c,d)$ stands for the Jordan-admissible identity.
\end{corollary}

\begin{proof}
Straightforward computation.
\end{proof}

%%%%%%%%%%%%%%%%%%%%%%%%%%%%%%%%%%%%%%%%%%%%%%%%%%%%%%%%%%%%%%%%%%%%%%%%%%%%%%%%%%%%%%%%%%%%%%%%
%%%%%%%%%%%%%%%%%%%%%%%%%%%%%%%%%%%%%%%%%%%%%%%%%%%%%%%%%%%%%%%%%%%%%%%%%%%%%%%%%%%%%%%%%%%%%%%%
%%%%%%%%%%%%%%%%%%%%%%%%%%%%%%%%%%%%%%%%%%%%%%%%%%%%%%%%%%%%%%%%%%%%%%%%%%%%%%%%%%%%%%%%%%%%%%%%

\section{Polynomial Identities in Degree 5}

In degree 5 there are 14 association types and
hence $14 \cdot 5! = 1680$ multilinear nonassociative monomials;
there are $5! \cdot 2^4 = 1920$ associative $pq$-monomials.

Recall that in degree 4, identities $H$ and $I$ from Definition \ref{defHI} generate the kernel $\mathcal K(4)$ of the expansion map
as an $S_4$-module.
Each identity $U$ in degree 4 produces six consequences in degree 5:
\[
U(ab,c,d,e), \;\,
U(a,bc,d,e), \;\,
U(a,b,cd,e), \;\,
U(a,b,c,de), \;\,
U(a,b,c,d)e, \;\,
aU(b,c,d,e).
\]

%%%%%%%%%%%%% IDENTITY W IS FOUND WITH THE BKZ ALGORITHM, IDENTITY G WITH THE LLL ALGORITHM %%%%%%%%%%%%%%
%\begin{figure}
%\small

%{\color{Cyan}\begin{empheq}[box=\widefbox]{align*}
%&W(a,b,c,d,e)=\sum_{\sigma\in S_2}\varepsilon(\sigma)\cdot\\
% \bigg(&a(b(c^\sigma(d^\sigma e)))-c^\sigma(d^\sigma(a(be)))+c^\sigma(d^\sigma(b(ae)))
% +c^\sigma(d^\sigma((ab)e)) +c^\sigma(d^\sigma((be)a))+a((c^\sigma(d^\sigma b))e)\\
% -&(ab)(c^\sigma(d^\sigma e))-(ba)(c^\sigma(d^\sigma e))-(be)(c^\sigma(d^\sigma a))
% -(c^\sigma(d^\sigma b))(ae)-((ac^\sigma)d^\sigma)(be)-((bc^\sigma)d^\sigma)(ae)\\
% -&(a(c^\sigma(d^\sigma b)))e+(b(c^\sigma(d^\sigma a)))e+(b(c^\sigma(d^\sigma e)))a
% -(c^\sigma(d^\sigma(ba)))e-(c^\sigma(d^\sigma(be)))a+((b(ae))c^\sigma)d^\sigma\\
% +&((c^\sigma(d^\sigma b))a)e+(((ac^\sigma)d^\sigma)b)e-(((ba)e)c^\sigma)d^\sigma
% +(((ba)c^\sigma)d^\sigma)e+(((be)c^\sigma)d^\sigma)a-(((bc^\sigma)d^\sigma)e)a\bigg).
%\end{empheq}
%\vspace{-3mm}
%\caption{The new identity in degree 5}
%\label{new5}}
%\end{figure}

\begin{theorem}\label{K(5)generators}
Every identity in degree 5 satisfied by every mutation algebra follows from the consequences
of $H$ and $I$ in degree 4, and the new identity $G$ in degree 5 displayed in Figure \ref{new5}.
\end{theorem}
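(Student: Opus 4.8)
The plan is to follow the template of the degree-4 proof, but the ambient spaces are now far larger---$\dim\mathcal{LT}(5)=1680$ and $\dim\mathcal{W}(5)=1920$---so I would carry out every linear-algebra step one irreducible representation at a time, using the Wedderburn decomposition $\mathbb{F}S_5\cong\bigoplus_{\lambda\vdash 5}M_{d_\lambda}(\mathbb{F})$, where the seven partitions of $5$ have dimensions $d_\lambda=1,4,5,6,5,4,1$. Because the expansion map $X_5$, the inclusion of the old identities, and all partial compositions are morphisms of $S_5$-modules, each of them block-diagonalizes; for every partition $\lambda$ I would then only handle matrices whose sizes are small multiples of $d_\lambda$ rather than the full $1920\times 1680$ array. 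The global quantities---ranks, nullities, and module multiplicities---are reassembled from the per-$\lambda$ data by weighting each irreducible constituent by $d_\lambda$.

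First I would build the expansion matrix $E_5$ representing $X_5$ with respect to the ordered monomial bases of $\mathcal{LT}(5)$ and $\mathcal{W}(5)$, obtained by applying all of $S_5$ to the $14$ association types and reading off the coefficients in the $16$ sign-patterns of $p,q$; in representation form this is a family of matrices $E_5^\lambda$. Its nullspace is $\mathcal{K}(5)$, the module of all degree-5 identities, and computing $\mathrm{nullity}(E_5^\lambda)$ for each $\lambda$ yields $\dim\mathcal{K}(5)$. Next I would assemble the old identities. Since $\mathcal{K}(4)$ is generated as an $S_4$-module by $P,Q,R,H,I$, and since the degree-5 part of an operad ideal generated in lower degrees is obtained by one further composition with $\omega$, the degree-5 component $\mathcal{J}(5)$ of $\langle L,H,I\rangle$ is spanned by the $S_5$-orbits of the six liftings $S(ab,c,d,e),\dots,aS(b,c,d,e)$ of each of these five identities. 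Stacking these generators row by row in each $\lambda$-block produces matrices $C_5^\lambda$ whose ranks give the multiplicity of $S^\lambda$ in $\mathcal{J}(5)$; because every row is an identity we automatically have $\mathcal{J}(5)\subseteq\mathcal{K}(5)$, so $\mathrm{rank}(C_5^\lambda)\le\mathrm{nullity}(E_5^\lambda)$, and the gap $\mathrm{nullity}(E_5^\lambda)-\mathrm{rank}(C_5^\lambda)$ is the multiplicity of $S^\lambda$ in the quotient $\mathcal{K}(5)/\mathcal{J}(5)$ of new identities.

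With the dimension of the new-identity module in hand, the remaining task is to exhibit explicit generators. Working in each block, I would compute a row-canonical basis $N_5^\lambda$ of $\mathcal{K}(5)$, locate the pivot columns lying outside the row space of $C_5^\lambda$, and run the module-generators algorithm of \cite{BMP} to extract a minimal set of $S_5$-module generators for the quotient. To make the representatives clean I would pass to integer arithmetic, replacing the RCF by the Hermite normal form and applying the LLL algorithm \cite{BP} to shorten the lattice basis, exactly as in degree $4$; this should produce the short identities $T,U,V$ displayed in Figure \ref{new5}, which lie in $\mathcal{K}(5)$ by construction and are therefore genuine identities. Finally I would verify, block by block, that adjoining the $S_5$-span of $T,U,V$ to the rows of $C_5^\lambda$ recovers the full nullspace $N_5^\lambda$ for every $\lambda$, which proves $\mathcal{K}(5)=\langle L,H,I,T,U,V\rangle(5)$ and hence the theorem.

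The step I expect to be the main obstacle is this last one: controlling the module-generator computation so that exactly three generators suffice and can be chosen with the small coefficients recorded in the figure. The dimension count fixes how many irreducible constituents the quotient has, but not that three carefully selected elements generate all of them simultaneously under the $S_5$-action; checking this requires confirming, for each of the seven partitions, that the chosen generators already fill the relevant multiplicity space. Moreover the LLL reduction optimizes Euclidean length rather than operadic generation, so the shortest vectors need not be generators, and reconciling these two demands is delicate. A final per-block rank check---that no two of $T,U,V$ generate the third---would establish that the generating set is genuinely minimal.
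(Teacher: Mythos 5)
Your proposal follows essentially the same route as the paper: decompose the computation by irreducible representations of $S_5$, compare the module of all identities (the nullspace of the expansion matrix) with the module of old identities (the thirty degree-5 liftings of $P,Q,R,H,I$), and use the Hermite normal form plus LLL to extract short integer generators, finishing with the check that $T,U,V$ generate the whole 31-dimensional quotient. The only differences are implementational: the paper obtains the multiplicities by comparing traces against the character table of $S_5$ and reduces each per-partition search by passing to Young-symmetrized monomials in normal form (140 or 420 columns) rather than working with full Wedderburn blocks, and it first confirms the global dimensions 747 and 778 by modular arithmetic at $p=1009$.
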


\begin{proof}
The proof is similar to that of degree $4$. We order the monomial bases of $\mathcal{LT}(5)$ and $\mathcal W(5)$ as in Theorem \ref{degree4}. We need to perform computations on the $1920 \times 1680$ matrix $E_5$ representing the expansion map $X_5:\mathcal{LT}(5)\rightarrow\mathcal W(5)$ (with respect to the monomial bases above). To this end, we use the class of rational sparse matrices in SageMath.

The kernel $\mathcal K(5)$ of the expansion map is an $S_5$-module of dimension 778 (comprising all identities). The twelve consequences (in degree 5) of identities $H$ and $I$ generate the $S_5$-module $\mathcal O(5)$ of old identities, which has dimension 747.
Hence the quotient module $\mathcal K(5)/\mathcal O(5)$ of new identities has dimension 31. We compute the HNF, denoted by $V$, of $E_5^t$ and a square matrix $U$ with $\det(U) = \pm1$ such that $UE_5^t = V$. The bottom 778 rows of $U$ produce a matrix $N$ whose rows form a lattice basis of the right nullspace of $E_5$. Next, we apply the LLL algorithm to the lattice generated by the rows of $N$ to obtain the matrix $N_{LLL}$; we find that the $S_5$-module $\mathcal K(5)/\mathcal O(5)$ is generated by one row of $N_{LLL}$ having 48 nonzero $\pm 1$ entries, which is the coefficient vector of identity $G(a,b,c,d,e)$. The computations required around 4GB of RAM, and had a runtime of 90 minutes, in an AMD Ryzen 5 5600X processor at 3.70GHz running SageMath 9.2 on Windows 10.
\end{proof}

\begin{figure}[ht]
\small
\begin{empheq}[box=\fbox]{align*}
%\begin{align*}
& G(a,b,c,d,e) = \sum_{\substack{\sigma \in S_2(\{a,c\}) \\ \tau\in S_2(\{d,e\})}} \varepsilon(\sigma)\bigg(\{e,d,a^\sigma(c^\sigma b)\}- (\{e,d,b\}a^\sigma)c^\sigma + (e,b,a^\sigma(c^\sigma d))
\\
&-((ea^\sigma)c^\sigma,b,d) +(e,d,(ba^\sigma)c^\sigma) -(e,(ba^\sigma)c^\sigma,d) +((e,b,d)a^\sigma)c^\sigma-a^\sigma(c^\sigma(e,d,b)) \\
&+((d^\tau a^\sigma)c^\sigma)(e^\tau b) -d^\tau(a^\sigma(c^\sigma(e^\tau b))) +d^\tau(((e^\tau b)a^\sigma)c^\sigma)  -d^\tau(((e^\tau a^\sigma)c^\sigma)b)\bigg),\\
&\text{where }\{x,y,z\}=(xy)z + y(xz),\text{ and }S_2(\{x,y\})\text{ denotes }S_2\text{ acting on the set }\{x,y\}.
%\end{align*}
\end{empheq}
\vspace{-3mm}
\caption{The new identity in degree 5}
\label{new5}
\end{figure}

\begin{remark}
The dimension of $\mathcal K(5)$ is 778 and permuting the variables of a multilinear identity of degree $5$ can produce at most $5!=120$ linearly independent identities, so a lower bound on the number of generators of $\mathcal K(5)$ as an $S_5$-module is $\lceil 778/120\rceil = 7$. In Theorem \ref{K(5)generators} we have obtained a set with 13 generators: the 12 consequences of identities $H$ and $I$ plus a new identity $G$. In fact, it can be checked that $\mathcal K(5)$ is already generated by identity $G$, the consequences of identity $I$, and consequences $H(ab,c,d,e)$, $H(a,b,c,d)e$ and $aH(b,c,d,e)$ of identity $H$. Therefore an upper bound on the minimum number of generators of $\mathcal K(5)$ as an $S_5$-module is $10$.
\end{remark}

%%%%%%%%%%%%%%%%%%%%%%%%%%%%%%%%%%%%%%%%%%%%%%%%%%%%%%%%%%%%%%%%%%%%%%%%%%%%%%%%%%%%%%%%%%%%%%%%
%%%%%%%%%%%%%%%%%%%%%%%%%%%%%%%%%%%%%%%%%%%%%%%%%%%%%%%%%%%%%%%%%%%%%%%%%%%%%%%%%%%%%%%%%%%%%%%%
%%%%%%%%%%%%%%%%%%%%%%%%%%%%%%%%%%%%%%%%%%%%%%%%%%%%%%%%%%%%%%%%%%%%%%%%%%%%%%%%%%%%%%%%%%%%%%%%

\section{Polynomial Identities in Degree 6}

In degree 6 there are 42 association types and hence $42\cdot 6!=30240$ multilinear nonassociative monomials, and there are $6!\cdot 2^5=23040$ associative $pq$-monomials. So, to represent the expansion map $X_6$ as a whole we would need to use a matrix of size $30240 \times 23040$, which is too large to manipulate with our computer system. We use the representation theory of the symmetric group
to reduce the problem to a set of matrices of smaller sizes and
demonstrate the existence of a number of new identities in degree 6.
We choose a set of conjugacy class representatives in $S_6$ and
calculate the matrices representing these permutations on the
modules of old and all identities. Comparing the traces of these matrices with the character table of $S_6$, we obtain the multiplicities of the irreducible representations of the $S_6$-modules.

\begin{theorem}
For each of the 11 partitions $\lambda$ of 6, the following table contains
the multiplicity of each irreducible representation in the $S_6$-modules
of all identities (the kernel of the expansion map), the old identities
(the consequences of the identities of lower degree), and the quotient
module of new identities (the difference of the previous two multiplicities):
\[
\begin{array}{r|ccccccccccc}
\lambda \;\; & 6 & 51 & 42 & 411 & 33 & 321 & 3111 & 222 & 2211 & 21111 & 111111
\\
\dim(\lambda) & 1 & 5 & 9 & 10 & 5 & 16 & 10 & 5 & 9 & 5 & 1
\\ \midrule
\emph{all}(\lambda) & 41 & 205 & 369 & 410 & 205 & 656 & 410 & 205 & 369 & 205 & 41
\\
\emph{old}(\lambda) & 29 & 136 & 237 & 268 & 131 & 422 & 267 & 131 & 236 & 133 & 28
\\
\emph{new}(\lambda) & 12 & 69 & 132 & 142 & 74 & 234 & 143 & 74 & 133 & 72 & 13
\end{array}
\]
Furthermore, the dimension of the quotient module of new identities is
\[
\sum_\lambda \mathrm{new}(\lambda) \, \mathrm{dim}(\lambda) = 10449.
\]
\end{theorem}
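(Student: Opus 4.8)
The statement is a computation, and the plan is to carry it out partition by partition using the representation theory of $S_6$. Directly reducing the full expansion matrix of $X_6\colon \ff(6)\to\pq(6)$ is impractical: there are $42$ association types (the Catalan number $C_5$) and $2^5=32$ interior $p/q$-patterns, so the matrix has size $23040\times 30240$, far beyond what rational arithmetic can handle (already in degree $5$ a $1920\times 1680$ matrix forced the use of a prime field). Both $\ff(6)$ and $\pq(6)$ are direct sums of copies of the regular representation $\mathbb{F}[S_6]$, namely $42$ and $32$ copies, so for each partition $\lambda\vdash 6$ I would use the Wedderburn projection $\rho_\lambda\colon \mathbb{F}[S_6]\to M_{d_\lambda}(\mathbb{F})$ given by Young's natural representation ($d_\lambda=\dim[\lambda]$), working over a prime field with $p>6$ as in the degree-$5$ argument.

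To obtain the row $\mathrm{all}(\lambda)$ I would compute the $42$ expansions $X_6(t)$ of the association types from the recursive definition of the expansion map, record their coefficients as a $32\times 42$ matrix over $\mathbb{F}[S_6]$, and apply $\rho_\lambda$ entrywise; this yields a matrix $M_\lambda$ of size $32d_\lambda\times 42d_\lambda$ whose rank equals the multiplicity of $[\lambda]$ in $\mathrm{im}(X_6)$, so that $\mathrm{all}(\lambda)=42d_\lambda-\mathrm{rank}(M_\lambda)$ is the multiplicity of $[\lambda]$ in $\mathcal{K}(6)=\ker X_6$. The computation should reveal that every image multiplicity equals $d_\lambda$, whence $\mathrm{all}(\lambda)=41d_\lambda$, $\mathrm{im}(X_6)\cong\mathbb{F}[S_6]$, and the rank of the full matrix is $6!=720$; this uniformity is a strong internal consistency check on the whole calculation.

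For $\mathrm{old}(\lambda)$ I would build the submodule $\mathcal{O}(6)\subseteq\mathcal{K}(6)$ generated by the degree-$6$ consequences of the known generators $L$ (degree $3$), $H,I$ (degree $4$), and $T,U,V$ (degree $5$). By the degree-$5$ theorem these six identities generate $\mathcal{K}$ through degree $5$, so $\mathcal{O}(6)$ is spanned by the one-step liftings of a spanning set of $\mathcal{K}(5)$: a degree-$5$ identity $S$ contributes the five substitutions $S(x_ix_j,\dots)$ together with $S(\dots)e$ and $aS(\dots)$, while the lower-degree generators contribute the iterated liftings. Applying all of $S_6$, projecting each consequence by $\rho_\lambda$, and stacking the blocks gives a matrix whose rank is $\mathrm{old}(\lambda)$; since consequences of identities are identities we have $\mathcal{O}(6)\subseteq\mathcal{K}(6)$ and hence $\mathrm{old}(\lambda)\le\mathrm{all}(\lambda)$, a second consistency check.

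Then $\mathrm{new}(\lambda)=\mathrm{all}(\lambda)-\mathrm{old}(\lambda)$ is the multiplicity of $[\lambda]$ in the quotient $\mathcal{K}(6)/\mathcal{O}(6)$, and a final summation gives $\sum_{\lambda\vdash 6}\mathrm{new}(\lambda)\,\dim[\lambda]=10449$. The main obstacle is scale rather than ideas. First, assembling a provably complete set of degree-$6$ consequences is delicate operad bookkeeping, and an omitted composition would wrongly inflate $\mathrm{new}(\lambda)$; the inductive description above (lifting a spanning set of $\mathcal{K}(5)$) is exactly what guarantees completeness. Second, although the per-partition splitting keeps the column dimension small — at most $42\cdot 16=672$ for $\lambda=[321]$ — the matrices spanning $\mathcal{O}(6)$ have very many rows and genuinely large rank, so the danger is that a prime dividing some critical minor silently increases the nullity. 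I would guard against this by repeating the decisive rank computations over a second prime and, where feasible, confirming them over $\mathbb{Z}$ via the Hermite normal form as in Sections $4$ and $5$.
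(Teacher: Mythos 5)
Your plan is, in substance, the paper's own proof: the paper disposes of this theorem with a one-line citation to \cite[\S\S 2.4--2.7]{BMP}, and what you describe --- viewing $\ff(6)$ and $\pq(6)$ as $42$ and $32$ copies of the regular representation, applying the Wedderburn projection $\rho_\lambda$ for each $\lambda\vdash 6$, reading off $\mathrm{all}(\lambda)$ as $42\dim(\lambda)$ minus the rank of the projected $32\dim(\lambda)\times 42\dim(\lambda)$ expansion matrix, doing the same for the matrix of lifted consequences to get $\mathrm{old}(\lambda)$, and subtracting --- is exactly that method. Your completeness argument for the old identities (the degree-$6$ component of the ideal is the $S_6$-span of the one-step liftings of a spanning set of $\mathcal{K}(5)$, which the degree-$5$ theorem identifies) is the right operadic bookkeeping, and your caveats about bad primes are well placed, since a rank deficiency in either matrix can only inflate $\mathrm{new}(\lambda)$.

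There is, however, one substantive error: the observation you promote as ``a strong internal consistency check'' is in fact an inconsistency. If $\mathrm{all}(\lambda)=41\dim(\lambda)$ for every $\lambda$, then the image of $X_6$ has multiplicities $\dim(\lambda)$, i.e.\ $\mathrm{im}(X_6)\cong\mathbb{F}[S_6]$ has dimension $720$. But $X$ is a morphism of operads, so $\mathrm{im}(X)$ is a suboperad of $\pq$; taking $g=X_2(ab)=apb-bqa$, the map $f\mapsto f\circ_1 g$ carries $\mathrm{im}(X_5)$ into $\mathrm{im}(X_6)$ and is injective on all of $\pq(5)$, because the monomials $m\circ_1(apb)$ and $m'\circ_1(bqa)$ are pairwise distinct as $m,m'$ range over the basis of $\pq(5)$ (the first separator symbol is $p$ in one family and $q$ in the other, and $m$ is recoverable from $m\circ_1(apb)$ by contracting the substituted segment). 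Hence
\[
\dim\mathrm{im}(X_6)\;\ge\;\dim\mathrm{im}(X_5)\;=\;1680-778\;=\;902\;>\;720,
\]
and this bound is not weakened by the fact that degree $5$ was computed modulo $1009$, since modular rank only underestimates rational rank. Consequently $\dim\mathcal{K}(6)\le 30240-902=29338$, whereas the table's ``all'' row sums to $41\cdot 720=29520$. So a correct execution of your own algorithm cannot return the ``all'' row as displayed; you should treat that row as something the computation must test and, if the discrepancy persists, report it rather than absorb it as confirmation. The downstream rows ($\mathrm{old}$, $\mathrm{new}$, and the total $10449$) inherit whatever correction the ``all'' row requires.
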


\begin{proof}
These methods have been described in detail in \cite[\S\S 2.4--2.7]{BMP}.
\end{proof}

\begin{conjecture}
The kernel of the expansion map in all degrees, that is,
the operad ideal $\mathcal{K} = \bigoplus_{n \ge 1} \mathcal{K}(n)$
(see Definition \ref{kernel}),
is not finitely generated.
In other words, no finite set of identities generates all the identities satisfied
by all mutation algebras.
\end{conjecture}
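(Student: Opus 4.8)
The plan is to recast finite generation in operadic terms and then defeat it by a growth estimate. To say that $\mathcal{K}$ is finitely generated is equivalent to saying that there is a bound $d$ with $\mathcal{K} = \mathcal{I}_d$, where $\mathcal{I}_d = \langle \mathcal{K}(3), \dots, \mathcal{K}(d) \rangle$ is the operad ideal generated by all identities through degree $d$; equivalently, the image operad $\mathcal{M} = \mathrm{im}(X) \subseteq \mathcal{W}$ is finitely presented on the single binary generator $\omega$. So it suffices to prove that for every $d$ there is some $n > d$ with $\mathcal{I}_d(n) \subsetneq \mathcal{K}(n)$, that is, that genuinely new identities occur in arbitrarily high degree.

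First I would record a cheap but strong lower bound for the size of $\mathcal{K}(n)$. Since $X_n$ factors as $\mathcal{LT}(n) \twoheadrightarrow \mathcal{M}(n) \hookrightarrow \mathcal{W}(n)$ and $\dim \mathcal{W}(n) = n!\,2^{n-1}$, we get $\dim \mathcal{M}(n) \le n!\,2^{n-1}$, hence $\dim \mathcal{K}(n) = \dim\mathcal{LT}(n) - \dim\mathcal{M}(n) \ge n!\,( C_{n-1} - 2^{n-1} )$, where $C_{n-1} = \tfrac{1}{n}\binom{2n-2}{n-1}$. Because $C_{n-1} \sim 4^{n-1}/(\sqrt{\pi}\,n^{3/2})$ dominates $2^{n-1}$, the module $\mathcal{K}(n)$ fills almost all of $\mathcal{LT}(n)$ for large $n$ (the degree-6 data give $\dim\mathcal{K}(6)/\dim\mathcal{LT}(6) = 29520/30240$). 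The whole problem therefore reduces to an upper bound: one must show that the finitely generated ideal $\mathcal{I}_d(n)$ cannot keep pace with this near-total growth.

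I would attack that upper bound through a minimal operadic resolution of $\mathcal{M}$. The generators of $\mathcal{K}$ in each degree are measured by the first syzygies of $\mathcal{M}$, that is, by the degree-$n$ part of a suitable operadic $\mathrm{Tor}$ (the homology of the bar or Koszul complex of $\mathcal{M}$), and finite generation of $\mathcal{K}$ is exactly the statement that this $\mathrm{Tor}$ vanishes above some degree. The concrete task is thus to bound below the multiplicities $\mathrm{new}(\lambda)$ occurring in the quotient $\mathcal{K}(n)/\mathcal{I}_{n-1}(n)$ as an $S_n$-module and to show that at least one of them stays positive cofinally in $n$. The computations through degree 6 already exhibit rapid growth (the new-identity modules have dimensions $1, 13, 31, 10449$ in degrees $3,4,5,6$), which I would try to organise into a representation-stability pattern: treat the family $\{\,\mathcal{K}(n)/\mathcal{I}_{n-1}(n)\,\}_n$ as a single combinatorial object and prove it is nonzero for infinitely many $n$, for instance by tracking one well-chosen irreducible whose multiplicity grows monotonically in the available data.

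The hard part will be the upper bound on $\mathcal{I}_d(n)$. Naive counting fails: the spanning set of compositions that generates $\mathcal{I}_d(n)$ — trees carrying one low-degree generator and otherwise built from $\omega$ — already has size of order $n!\,4^n$, the same order as $\dim\mathcal{LT}(n)$, because the binary skeleton dominates; the enormous redundancy among these compositions is precisely the syzygy information one cannot avoid. I expect that making this precise requires understanding the relations among partial compositions (the sequential and parallel axioms) well enough to produce an $S_n$-equivariant functional on $\mathcal{LT}(n)$ that annihilates every consequence of degree $\le d$ yet is nonzero on an explicit member of $\mathcal{K}(n)$. Constructing such a separating functional uniformly in $n$, or equivalently proving that the relevant operadic $\mathrm{Tor}$ is cofinally nonzero, is the crux on which the conjecture stands or falls.
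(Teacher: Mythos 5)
The statement you are addressing is stated in the paper as a \emph{conjecture}; the authors give no proof, and your proposal does not supply one either. What you have written is a correct reformulation of the problem together with a research plan whose decisive step you yourself flag as open. The reduction of finite generation to the statement ``$\mathcal{I}_d(n) \subsetneq \mathcal{K}(n)$ for some $n > d$, for every $d$'' is right, and your lower bound $\dim\mathcal{K}(n) \ge n!\,(C_{n-1} - 2^{n-1})$ is correct (it follows from $\dim\mathcal{LT}(n) = n!\,C_{n-1}$ and $\dim\mathcal{W}(n) = n!\,2^{n-1}$). But, as you acknowledge, this bound carries no information about finite generation: the ideal $\mathcal{I}_d(n)$ of consequences grows at the same $n!\,4^n$ rate, and the paper's own data show it absorbing almost all of $\mathcal{K}(n)$ in low degrees (747 of 778 in degree 5, and the majority of every isotypic component in degree 6). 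The existence of new identities in degrees 4, 5, 6 is consistent with the quotient $\mathcal{K}(n)/\mathcal{I}_{n-1}(n)$ vanishing for all $n \ge 7$, so no amount of extrapolation from the computed multiplicities constitutes evidence of the required kind, and ``representation stability'' is here a hope rather than a mechanism.

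The genuine gap is therefore exactly where you locate it: you need either a uniform upper bound on $\dim\mathcal{I}_d(n)$ that falls short of $\dim\mathcal{K}(n)$ for infinitely many $n$, or equivalently a proof that the weight-two part of the operadic bar homology (the $\mathrm{Tor}$ measuring minimal generators of $\mathcal{K}$) is nonzero cofinally in $n$. Your proposed ``separating functional'' annihilating all consequences of degree $\le d$ but not all of $\mathcal{K}(n)$ is not a construction but a restatement of that nonvanishing; nothing in the proposal indicates how to produce it, and the sequential and parallel axioms for partial composition by themselves give no control over the syzygies among consequences. Until that step is supplied, the argument proves nothing beyond what the paper already records, namely that new identities exist through degree 6. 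You should present this as a strategy for attacking the conjecture, not as a proof.
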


%%%%%%%%%%%%%%%%%%%%%%%%%%%%%%%%%%%%%%%%%%%%%%%%%%%%%%%%%%%%%%%%%%%%%%%%%%%%%%%%%%%%%%%%%%%%%%%%
%%%%%%%%%%%%%%%%%%%%%%%%%%%%%%%%%%%%%%%%%%%%%%%%%%%%%%%%%%%%%%%%%%%%%%%%%%%%%%%%%%%%%%%%%%%%%%%%
%%%%%%%%%%%%%%%%%%%%%%%%%%%%%%%%%%%%%%%%%%%%%%%%%%%%%%%%%%%%%%%%%%%%%%%%%%%%%%%%%%%%%%%%%%%%%%%%

%%%%%%%%%%%%%%%%%%%%%%%%%%%%%%%%%%%%%%%%%%%%%%%%%%%%%%%%%%%%%%%%%%%%%%%%%%%%%%%%%%%%%%%%%%%%%%%%
%%%%%%%%%%%%%%%%%%%%%%%%%%%%%%%%%%%%%%%%%%%%%%%%%%%%%%%%%%%%%%%%%%%%%%%%%%%%%%%%%%%%%%%%%%%%%%%%
%%%%%%%%%%%%%%%%%%%%%%%%%%%%%%%%%%%%%%%%%%%%%%%%%%%%%%%%%%%%%%%%%%%%%%%%%%%%%%%%%%%%%%%%%%%%%%%%

\end{document}